\theoremstyle{plain}
\newtheorem{theorem*}{Theorem}
\newtheorem*{lemma*} {Lemma}
\newtheorem{corollary*}[theorem*]{Corollary}
\newtheorem{proposition*}[theorem*]{Proposition}
\newtheorem*{conjecture*}{Conjecture}
\newtheorem{theorem}{Theorem}[section]
\newtheorem{lemma}[theorem]{Lemma}
\newtheorem*{theorem1*}{Theorem 1}
\newtheorem*{theorem2*}{Theorem 2}
\newtheorem*{theorem3*}{Theorem 3}
\newtheorem*{theorem4*}{Theorem 4}
\newtheorem*{theorem5*}{Theorem 5}
\newtheorem*{theorem6*}{Theorem 6}
\newtheorem*{corollary1*}{Corollary 1}
\newtheorem*{corollary2*}{Corollary 2}
\newtheorem*{corollary3*}{Corollary 3}
\newtheorem*{corollary4*}{Corollary 4}
\newtheorem*{corollary5*}{Corollary 5}
\newtheorem*{corollary6*}{Corollary 6}
\newtheorem{proposition}[theorem]{Proposition}
\newtheorem{conjecture}[theorem]{Conjecture}
\theoremstyle{remark}
\newtheorem*{remark}{Remark}
\newtheorem{example*}{Example}
\newtheorem*{claim}{Claim}
\theoremstyle{definition}
\def\fol{\mathcal{F}}
 \def\Q{\Bbb{Q}}  \def\Z{\Bbb{Z}} \def\R{\Bbb{R}} 
\def\N{\Bbb{N}}    
 \def\a{\alpha} \def\g{\gamma}  \def\bp{\begin{pmatrix}}
\def\sm{\setminus} \def\ep{\end{pmatrix}} \def\bn{\begin{enumerate}} 
   \def\en{\end{enumerate}}
\def\ba{\begin{array}} \def\ea{\end{array}}  
   \def\a{\alpha} \def\b{\beta}
\def\ker{\mbox{Ker}}\def\be{\begin{equation}} \def\ee{\end{equation}} 
 \def\hom{\mbox{Hom}}  
   \def\eps{\epsilon}
\def\w{\omega}   
    \def\fr12{\frac{1}{2}} \def\z12{\Z[\fr12]}
\begin{document}
\title[Symplectic structures on $4$--manifolds with a free circle action]{Construction of symplectic structures on $4$--manifolds with a free circle action}
\author{Stefan Friedl}
\address{Mathematisches Institut, Universit\"at zu K\"oln, Germany}
\email{sfriedl@gmail.com}
\author{Stefano Vidussi}
\address{Department of Mathematics, University of California,
Riverside, CA 92521, USA} \email{svidussi@math.ucr.edu} \thanks{The second author was partially supported by NSF grant \#0906281.}
\date{\today}

\begin{abstract}
Let $M$ be a closed 4-manifold with a free circle action.
If the orbit manifold $N^3$ satisfies an appropriate fibering condition, then
we show how to represent a cone in $H^2(M;\R)$ by symplectic forms. This generalizes earlier constructions by Thurston, Bouyakoub and Fern\'andez-Gray-Morgan.
In the case that $M$ is the product 4-manifold $S^1\times N$ our construction complements the results of \cite{FV08} and allows us to completely determine the symplectic cone of such 4-manifolds.
\end{abstract}

\maketitle

\section{Introduction and main results}

Let $M$ be a closed $4$--manifold with a free circle action. We denote the orbit space by $N$
and we denote by $p:M\to N$ the quotient map, which defines a principal $S^1$--bundle over $N$.
We denote by $p_*:H^2(M;\R)\to H^1(N;\R)$ the map given by integration along the fiber.
Our main result (which will be proved in Section \ref{sec:const}) is the following existence theorem.

\begin{theorem} \label{thm:const}
 Let $M$ be a closed, oriented $4$--manifold
admitting a free circle action.
Let $\psi \in H^2(M;\R)$ such that $\psi^2 > 0\in H^4(M;\R)$ and such that $p_*(\psi)\in H^1(N;\R)$ can be represented by a non--degenerate closed 1--form.  Then there exists an
$S^1$--invariant symplectic form $\w$ on $M$ with $[\w]=\psi \in H^2(M;\R)$.
\end{theorem}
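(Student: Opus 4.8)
The plan is to write down an explicit $S^1$-invariant closed $2$-form in the class $\psi$ and then arrange its non-degeneracy. First I would fix a principal connection on $p:M\to N$, i.e.\ an $S^1$-invariant $1$-form $\alpha$ taking the value $1$ on the infinitesimal generator of the action; its curvature satisfies $d\alpha=p^*\eta$ for a closed $2$-form $\eta$ on $N$ representing the real Euler class $e\in H^2(N;\R)$ (so in particular $p^*e=0$). Let $\beta$ be the nowhere-zero closed $1$-form on $N$ representing $p_*(\psi)$ provided by the hypothesis, and make the ansatz
\[
\omega \;=\; \alpha\wedge p^*\beta \;+\; p^*\mu ,
\]
with $\mu$ a $2$-form on $N$ to be determined. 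As $\alpha$ is invariant and $p^*\beta,\,p^*\mu$ are basic, $\omega$ is automatically $S^1$-invariant, so only closedness, the cohomology class, and non-degeneracy remain. Using $d\beta=0$ one finds $d\omega=p^*(\eta\wedge\beta+d\mu)$, and since $\alpha\wedge\alpha=0$ while $\mu\wedge\mu=0$ on the $3$-manifold $N$,
\[
\omega\wedge\omega \;=\; 2\,\alpha\wedge p^*(\beta\wedge\mu).
\]
Hence $\omega$ is closed exactly when $d\mu=-\eta\wedge\beta$, and $\omega$ is symplectic exactly when in addition $\beta\wedge\mu$ is a positive volume form on $N$, i.e.\ when $\mu$ restricts to a positive area form on the leaves of the foliation $\{\beta=0\}$.

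Next I would dispatch the cohomological constraints. By exactness of the Gysin sequence at $H^1$, the image of $p_*$ equals the kernel of
\[
\cup\, e : H^1(N;\R)\longrightarrow H^3(N;\R);
\]
since $p_*(\psi)$ lies in that image we get $e\cup p_*(\psi)=0$, so $[\eta\wedge\beta]=0$ and the equation $d\mu=-\eta\wedge\beta$ admits a solution. For such a $\mu$ the form $\omega$ is closed with $p_*[\omega]=[\beta]=p_*(\psi)$, whence $\psi-[\omega]\in\ker p_*=\mathrm{im}\,p^*$; adding $p^*\mu'$ for a closed $2$-form $\mu'$ in the appropriate class corrects $[\omega]$ to $\psi$ without disturbing closedness or invariance. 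This bookkeeping still leaves me free to alter $\mu$ by any exact form and by any multiple of $\eta$, none of which changes $[\omega]$ or the relation $d\mu=-\eta\wedge\beta$.

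The main obstacle is the remaining pointwise positivity $\beta\wedge\mu>0$. Here the hypothesis $\psi^2>0$ enters: integrating the formula for $\omega\wedge\omega$ and using $p_*(\alpha\wedge p^*(\beta\wedge\mu))=\beta\wedge\mu$ yields $\int_M\psi^2=2\int_N\beta\wedge\mu$, so $\psi^2>0$ is precisely the cohomological positivity $\int_N\beta\wedge\mu>0$. To promote this to a pointwise statement I would appeal to the theory of foliation cycles for the (taut) foliation $\{\beta=0\}$: by Sullivan's criterion the class of $\mu$, after adjusting by a multiple of $e$ and by an exact form, is represented by a leafwise-positive closed $2$-form provided it pairs positively with every foliation cycle, and the nowhere-zero closed transverse form $\beta$ tames these cycles so that positivity on all of them is read off from the single inequality $\int_N\beta\wedge\mu>0$ already supplied by $\psi^2>0$. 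In the special case that $\beta$ is a fibration $1$-form for $N\to S^1$ this reduces to the classical Thurston construction --- choose $\mu$ restricting to a leafwise area form and make it closed via Moser's theorem applied to the monodromy --- which is the situation generalizing Thurston, Bouyakoub and Fern\'andez--Gray--Morgan. Feeding the resulting $\mu$ into the ansatz yields the desired $S^1$-invariant symplectic form with $[\omega]=\psi$. I expect the control of the foliation cycles of $\{\beta=0\}$ by $\int_N\beta\wedge(\cdot)$ to be the delicate point on which the argument turns.
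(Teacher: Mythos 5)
Your ansatz and the cohomological bookkeeping reproduce the skeleton of the paper's argument: the paper likewise writes the symplectic form as $p^*(\Omega)+p^*(\a)\wedge \eta$ with $\eta$ a connection form and $\Omega$ a $2$--form on the orbit space, uses the Gysin sequence to see that $e\cup p_*(\psi)=0$ and to split off the basic part, and reduces everything to producing $\Omega$ in the correct class with $\Omega\wedge\a>0$ pointwise (here $\a$ is the paper's name for your $\beta$, the nowhere--zero closed $1$--form representing $p_*(\psi)$). But the two points you defer are exactly where the work lies, and as written both are gaps. First, Sullivan's criterion produces \emph{closed} leafwise--positive $2$--forms, whereas your $\mu$ is constrained by the inhomogeneous equation $d\mu=-\eta\wedge\beta$; modifying a Sullivan representative by an exact form or by a multiple of the (closed) curvature form keeps it closed, so it can never solve that equation unless $\eta\wedge\beta=0$. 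The paper removes this tension at the outset via Lemma \ref{lem:beta}: it represents $e$ by a $2$--form of the special shape $\b\wedge\a$ (built from a Thom form of a curve dual to $e$ lying \emph{in a leaf}, which exists because $e\cup[\a]=0$), so that the curvature wedged with $\a$ vanishes identically and the constraint on the basic part becomes simply $d\Omega=0$. You need either this normalization of the connection or a positivity argument adapted to the affine space $\{\mu:\, d\mu=-\eta\wedge\beta\}$; you supply neither.

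Second, the statement you yourself flag as delicate --- that the single inequality $\int_N\beta\wedge\mu>0$ forces positivity on every foliation cycle of $\ker\beta$, equivalently that every foliation cycle of a foliation defined by a nowhere--zero closed $1$--form is a non--negative multiple of the Poincar\'e dual of its class --- is true, but it is essentially equivalent to the main technical content of the theorem and cannot be read off from tautness or from Sullivan's general theory alone. The paper proves the required positivity directly and elementarily (Lemmas \ref{lem1} and \ref{lem:nonzero}): using the flow of a vector field dual to $\a$, it represents every integral class $h$ with $h\cup[\a]>0$ by embedded curves transverse to the foliation passing through any prescribed point, covers $N$ by tubular neighborhoods of finitely many such curves, and sums the resulting Thom forms to obtain a closed representative with $\Omega\wedge\a>0$ everywhere; an inductive rational--approximation argument then extends this from integral to arbitrary real classes. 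Until you either prove your foliation--cycle claim or substitute a construction of this kind, the proposal identifies the correct strategy and the correct reductions but does not close the proof.
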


\begin{remark}
\bn
\item Note that given $\phi \in H^1(N;\R)$ we can represent $\phi$ by a non-degenerate (i.e. nowhere zero) closed 1--form if and only if
$\phi$ lies in the cone on a fibered face of the Thurston norm ball (see \cite{Th86} for details). Therefore the theorem assumes implicitly that $N$ admits a fibration over $S^1$.
\item
This theorem generalizes work by Thurston \cite{Th76}, Bouyakoub \cite{Bou88} and Fern\'andez-Gray-Morgan \cite{FGM91}.
More precisely, Thurston first constructed symplectic forms on product manifolds $S^1\times N$ for fibered 3-manifolds $N$.
Bouyakoub generalized Thurston's results and showed that given  $\psi$ as in the theorem, there exists an $S^1$--invariant symplectic form
$\w$ with $p_*([\w])=p_*(\psi)$. Finally Fern\'andez-Gray-Morgan proved the theorem in the case that
 $p_*(\psi)$ is rational.
 \en
\end{remark}

Let $W$ be a $4$--manifold. The set of all elements of $H^2(W;\R)$ which can be represented by a symplectic form is called the \emph{symplectic cone} of $W$. Note that this is indeed a cone, i.e. if $\psi$ can be represented by a symplectic form, then any non-zero scalar multiple can also be represented by a symplectic form. Determining the symplectic cone of 4-manifolds is a fundamental problem, but little seems to be known in general.
We refer to \cite[Section~3]{Li08} for more information.

In \cite{FV08} the authors showed that if $N$ is a closed 3-manifold, then $S^1\times N$ is symplectic if and only if $N$ fibers over $S^1$.
(In the case that $b_1(N)=1$ this also follows from combining the work of Kutluhan-Taubes \cite{KT09} with the work of  Kronheimer-Mrowka \cite{KM08} and Ni \cite{Ni08}). In fact, a slightly more precise version of this result (see \cite[Theorem~1.3]{FV08}) will allows us in
Section \ref{section:3} to
determine the symplectic cone of closed 4-manifolds of the form $S^1\times N$:

\begin{theorem}\label{thm:symplecticcone}
Let $N$ be a closed, oriented 3--manifold. Then given $\psi \in H^2(S^1\times N;\R)$ the following are equivalent:
\bn
\item $\psi$ can be represented by a symplectic structure,
\item $\psi$ can be represented by a symplectic structure which is $S^1$--invariant,
\item $\psi^2>0$ and the K\"unneth component $\phi=p_*(\psi) \in H^1(N;\R)$ of $\psi$ lies in the open cone on a fibered face of the Thurston norm ball of $N$.
\en
\end{theorem}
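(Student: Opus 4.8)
The plan is to prove the equivalence by establishing the cycle $(3)\Rightarrow(2)\Rightarrow(1)\Rightarrow(3)$. The implication $(2)\Rightarrow(1)$ is immediate, as an $S^1$--invariant symplectic form is in particular a symplectic form.

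For $(3)\Rightarrow(2)$ I would invoke Theorem~\ref{thm:const} directly. The product $M=S^1\times N$ carries the free circle action rotating the first factor, with orbit space $N$ and quotient map $p$ the projection; the induced integration--along--the--fiber map $p_*\colon H^2(M;\R)\to H^1(N;\R)$ is exactly the K\"unneth projection onto the $H^1(N;\R)$ summand, so $\phi=p_*(\psi)$ is the K\"unneth component appearing in (3). By Thurston's fibration criterion (recorded in the first part of the Remark after Theorem~\ref{thm:const}), the hypothesis that $\phi$ lies in the open cone on a fibered face of the Thurston norm ball of $N$ is equivalent to $\phi$ being representable by a nowhere--vanishing closed $1$--form. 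Together with $\psi^2>0$, these are precisely the hypotheses of Theorem~\ref{thm:const}, which then yields an $S^1$--invariant symplectic form $\omega$ with $[\omega]=\psi$, giving (2).

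The one substantive implication is $(1)\Rightarrow(3)$. That $\psi^2>0$ is the easy half: if $\omega$ is a symplectic form representing $\psi$ and inducing the fixed orientation of $M$, then $\omega\wedge\omega$ is a positive volume form, so $\psi^2=\int_M\omega\wedge\omega>0$. The genuine content is that the K\"unneth component $\phi=p_*(\psi)$ must lie in the open cone on a fibered face. For this I would appeal to the sharp form of the main result of \cite{FV08}, namely \cite[Theorem~1.3]{FV08}: it asserts not only that a symplectic structure on $S^1\times N$ forces $N$ to fiber over $S^1$, but more precisely that the K\"unneth component of any symplectic class is itself a fibered class, i.e.\ lies in the cone on a fibered face. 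Applying this to $\phi=p_*(\psi)$ delivers exactly the Thurston--norm condition of (3) and closes the cycle.

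I expect the only real obstacle to sit in this last step, and it is an obstacle already overcome in \cite{FV08} rather than here: the proof there is gauge--theoretic, combining Taubes' constraints on the Seiberg--Witten basic classes of a closed symplectic $4$--manifold with the identification (via the Meng--Taubes and Turaev theorems) of the Seiberg--Witten invariants of $S^1\times N$ with the torsion and Alexander invariants of $N$, together with the fact that the twisted Alexander invariants detect fiberedness of $N$. Since the present theorem imports that direction as known input, the new mathematical work is carried entirely by $(3)\Rightarrow(2)$ through Theorem~\ref{thm:const}; what remains for this proof is the bookkeeping above, verifying that the hypotheses of Theorem~\ref{thm:const} and of \cite[Theorem~1.3]{FV08} match the three conditions as stated.
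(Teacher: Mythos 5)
Your overall architecture matches the paper's: $(2)\Rightarrow(1)$ is trivial, $(3)\Rightarrow(2)$ is a direct application of Theorem \ref{thm:const} (your identification of $p_*$ with the K\"unneth projection and your use of Thurston's fibration criterion are exactly the intended bookkeeping), and $(1)\Rightarrow(3)$ rests on the detection results of \cite{FV08}. The implication $\psi^2>0$ is handled the same way.

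The gap is in $(1)\Rightarrow(3)$. You attribute to \cite[Theorem~1.3]{FV08} the statement that the K\"unneth component of \emph{any} symplectic class on $S^1\times N$ lies in the open cone on a fibered face. But the results of \cite{FV08} that the paper actually invokes (Theorems 1.2 and 1.4 there) only give fiberedness of the pair $(N,p_*([\w]))$ when $p_*([\w])$ is a \emph{primitive integral} class in $H^1(N;\Z)$ (and, as the paper notes, the argument extends to real classes whose $p_*$-image happens to be primitive integral). For a general symplectic form the K\"unneth component $\phi=p_*(\psi)$ can be irrational, and then no single citation of \cite{FV08} applies; this is precisely the content of Proposition \ref{prop:nondeg} in the paper, which your proposal omits. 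The missing argument runs as follows: if $\phi$ is rational, rescale to reduce to the primitive integral case; if $\phi$ is irrational, use the openness of the symplectic condition to perturb $\w$ to a symplectic form $\w'$ with $p_*([\w'])$ rational and lying in the same open cone $C$ on a face of the Thurston norm ball (this uses that the norm ball has rational vertices, \cite[Section~2]{Th86}), conclude from the integral case that $C$ is a fibered cone, and then apply \cite[Theorem~5]{Th86} to deduce that every class in $C$ --- in particular $\phi$ itself --- is represented by a non-degenerate closed $1$-form, i.e.\ lies in the open cone on a fibered face. Without this perturbation step your proof only establishes $(1)\Rightarrow(3)$ for symplectic classes with rational K\"unneth component.
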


\begin{remark}
\bn
\item
Note that we are not claiming that \textit{any} symplectic form is isotopic, or even homotopic to an $S^1$--invariant form, although this might be the case.
\item
We expect a very similar theorem to hold for closed 4-manifold with a free circle action.
In fact the proof of Theorem \ref{thm:symplecticcone} together with work of Bowden \cite{Bow09}
and the authors \cite{FV07} shows that an analogous statement holds for circle bundles $M\to N$ whenever $N$ has vanishing Thurston norm
or $N$ is a graph manifold.
\en
\end{remark}

\noindent \textbf{Convention.} All maps are assumed to be $C^\infty$ unless stated otherwise. All manifolds are assumed to be connected, compact, closed and orientable. All homology and cohomology groups are with integral coefficients, unless it says specifically otherwise.
\\

\noindent \textbf{Note.}
This paper is one section of an unpublished 2007 preprint by the authors called `Symplectic 4-manifolds with a free circle action'.
\\

\noindent \textbf{Acknowledgment.} We would like to thank Paolo Ghiggini and Ko Honda for helpful conversations.

\section{Construction of symplectic forms} \label{sec:const}

\subsection{Outline of the proof of Theorem \ref{thm:const}}

In this section we will give a proof of Theorem \ref{thm:const}  modulo some technical lemmas which will be proved in
Sections \ref{section:aux1}, \ref{section:aux2} and \ref{section:aux3}.

 For the remainder of this section let $M$ be an oriented $4$--manifold
admitting a free $S^1$-action.
We denote the orbit space by $N$
and we denote by $p:M\to N$ the quotient map, which defines a principal $S^1$--bundle over $N$.

In the following we denote by $e\in H^2(N)$ the Euler class of the $S^1$--bundle $M\to N$.
(Note that $M$ decomposes as a product $M=S^1\times N$ if and only if $e=0$.)
Recall the
Gysin sequence \be \label{sequ0}  \Z=H^0(N;\R) \xrightarrow{\cdot e}
H^2(N;\R)\xrightarrow{p^*} H^2(M;\R)\xrightarrow{p_*} H^1(N;\R) \xrightarrow{\cup e}H^3(N;\R)=\R.\ee
Here $p_{*}:H^2(M;\R)\to H^1(N;\R)$ is the map given by integration along the fiber. The same sequence can be considered for cohomology with integral coefficients.
Note that the map $p_*:H^4(M)\to H^3(N)$ is an isomorphism, and we endow $N$ with the orientation given by the image of the orientation of $M$ under $p_*$.

Throughout this section we assume that $\psi \in H^2(M;\R)$ is such that $\psi^2 > 0\in H^4(M;\R)$ and such that $p_*(\psi)\in H^1(N;\R)$ can be represented by a non--degenerate closed 1--form $\a$.

\begin{lemma} \label{lem:beta}
There exists a 1--form $\b$ on $N$ such that $\a\wedge \b$ is closed and $[\b\wedge \a]=e\in
H^2(N;\R)$.
\end{lemma}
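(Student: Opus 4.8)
The plan is to reduce the statement to the construction of a single closed $2$--form and then finish by a pointwise division. Concretely, it suffices to produce a closed $2$--form $\eta$ on $N$ with $[\eta]=e\in H^2(N;\R)$ and $\alpha\wedge\eta=0$. Indeed, choose a vector field $X$ on $N$ with $\alpha(X)\equiv 1$ (possible since $\alpha$ is nowhere zero) and set $\beta:=-\iota_X\eta$. The antiderivation identity $\iota_X(\alpha\wedge\eta)=(\iota_X\alpha)\,\eta-\alpha\wedge\iota_X\eta$ together with $\alpha\wedge\eta=0$ gives $\eta=\alpha\wedge\iota_X\eta=\beta\wedge\alpha$, so that $[\beta\wedge\alpha]=[\eta]=e$; moreover $\alpha\wedge\beta=-\eta$ is closed. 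Thus the entire content of the lemma lies in the existence of such an $\eta$.

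First I would record the one constraint that $e$ must satisfy. Since $[\alpha\wedge\eta]=[\alpha]\cup[\eta]$, a necessary condition for the existence of $\eta$ is $[\alpha]\cup e=0\in H^3(N;\R)$. This holds automatically here: by hypothesis $[\alpha]=p_*(\psi)$ lies in the image of $p_*$, so exactness of the Gysin sequence (\ref{sequ0}) at $H^1(N;\R)$ forces $[\alpha]\cup e=0$. The lemma is therefore the assertion that this necessary cohomological condition is in fact \emph{sufficient} to find an $\alpha$--divisible closed representative of $e$.

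To upgrade the vanishing of the cup product to an actual $\alpha$--divisible representative I would exploit the geometry of $\alpha$. Because $d\alpha=0$, the plane field $\xi=\ker\alpha$ is integrable by Frobenius, so $\alpha$ defines a cooriented codimension--one foliation $\fol$ of $N$ whose leaves are surfaces. The forms divisible by $\alpha$ form a subcomplex of the de Rham complex (since $d(\alpha\wedge\omega)=-\alpha\wedge d\omega$), and restriction to the leaves identifies the quotient complex with the leafwise de Rham complex of $\fol$. Chasing the associated long exact sequence, the class $e$ admits an $\alpha$--divisible closed representative precisely when its image in the leafwise cohomology $H^2(\fol)$ vanishes. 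Hence the problem becomes showing that the leafwise $2$--form $\eta_0|_{\fol}$ obtained from any closed representative $\eta_0$ of $e$ is leafwise exact, using only $[\alpha]\cup e=0$.

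The main obstacle is exactly this last step, and it is where the hypothesis that $\alpha$ be merely \emph{real} (rather than rational) bites. When $[\alpha]$ is rational, Tischler's theorem presents $\alpha$ (after scaling) as $q^*(d\theta)$ for a fibration $q:N\to S^1$ with compact surface fibers; the leafwise cohomology then reduces to the cohomology of a fiber and the desired vanishing is immediate from the Wang sequence — this is essentially the case already covered by Bouyakoub and Fern\'andez--Gray--Morgan. For irrational $\alpha$ the leaves are non--compact and typically dense, $H^2(\fol)$ is no longer governed by a single fiber, and the reduction must be made by hand: I would either approximate $\alpha$ by nearby nonsingular closed forms with rational periods and control the limit of the corresponding primitives, or integrate $\eta_0$ leafwise along the flow of $X$ and average, solving the transverse first--order equation that exhibits $\eta_0|_{\fol}$ as a leafwise differential. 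Making this averaging (or limiting) argument converge is the crux of the lemma.
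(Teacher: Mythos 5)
Your reduction is correct as far as it goes: producing a closed $2$--form $\eta$ with $[\eta]=e$ and $\a\wedge\eta=0$ and then setting $\b=-\iota_X\eta$ for a vector field $X$ with $\a(X)\equiv 1$ does yield the lemma, and the necessary condition $[\a]\cup e=0$ does follow from exactness of the Gysin sequence. But the argument stops exactly where the real work begins. You reformulate the existence of $\eta$ as the vanishing of the image of $e$ in the leafwise cohomology $H^2(\fol)$ and then offer two unexecuted strategies (rational approximation of $\a$, or leafwise integration along the flow plus averaging), conceding that making either converge ``is the crux of the lemma.'' That concession is accurate: for irrational $[\a]$ the leaves are dense and the leafwise cohomology of $\fol$ is in general infinite--dimensional and non--Hausdorff, so neither the limit of primitives under rational approximation nor a leafwise averaging procedure is controlled without substantial additional input. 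As written, the lemma is not proved.

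The paper sidesteps leafwise cohomology entirely by exploiting the integrality of $e$ (it is an Euler class). Since $e\cup[\a]=0$, Lemma \ref{lem1} --- proved by flowing along a vector field dual to $\a$ and correcting by the function $\Phi$ --- represents the Poincar\'e dual of $e$ by an embedded closed curve $c$ lying inside leaves of $\fol$. One then chooses tubular neighborhoods $f_i:S^1\times D^2\to N$ of the components of $c$ adapted to the foliation, so that $f_i^*(\a)=r_i\,dy$, and takes $\b$ to be a sum of bump $1$--forms $\rho_i(x,y)\,dx$ supported in these tubes. Closedness of $\b\wedge\a$ is then a one-line local computation, and $[\b\wedge\a]=e$ is checked by intersecting with surfaces. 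Note that $\eta=\b\wedge\a$ produced this way is exactly the $\a$--divisible Thom-type representative you were seeking; the geometric step of pushing a dual curve into a leaf is what replaces the missing analytic argument in your sketch.
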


In the case that $p_*(\psi)$ is integral this lemma is stated in \cite[Lemma~15]{FGM91}.
We give the proof of Lemma \ref{lem:beta} in Section \ref{section:beta}.

 Now let $\g=\b\wedge
\a$. Since $[\g]=e\in H^2(N;\R)$ we can find a  1--form $\eta$ (namely a connection 1--form for $M\to N$)
on $M$
with the following properties: \bn
\item $\eta$ is invariant under the $S^1$--action,
\item the integral of $\eta$ over a fiber (which inherits an orientation from $S^1$) equals 1, and
\item $d\eta=p^*(\g)$.
\en
This form is often referred to as \emph{global angular form}.
We  refer to \cite{BT82} for more details.
 Note that (1) and (2) imply that $\eta$ is non--trivial on any
non--trivial vector tangent to a fiber.

Note that $d(p^*(\a)\wedge \eta)=p^*(\a\wedge \gamma)=p^*(\a\wedge \a\wedge \b)=0$. We can therefore consider
$\psi-[p^*(\a)\wedge \eta]\in H^2(M;\R)$. It follows easily from $p_*(\psi)=[\a]$ and the second property of $\eta$ that
$p_*\big(\psi-[p^*(\a)\wedge \eta]\big)=0\in H^1(N;\R)$. By the exact sequence (\ref{sequ0}) we can therefore find
$h\in H^2(N;\R)$ with $p^*(h)=\psi-[p^*(\a)\wedge \eta]$. By assumption we have  $\psi^2 > 0$.
Note that
\[ \ba{rcl}  \psi^2&=&(p^*(h)+[p^*(\a)\wedge \eta])\cup (p^*(h)+[p^*(\a)\wedge \eta])\\
&=&p^*(h^2)+[p^*(\a)\wedge \eta]\cup [p^*(\a)\wedge \eta]+2p^*(h)\cup [p^*(\a)\wedge \eta]\\
&=&p^*(h^2)+[p^*(\a)\wedge \eta\wedge p^*(\a)\wedge \eta]+2p^*(h)\cup [p^*(\a)\wedge \eta]
.\ea \]
The first term is zero since $N$ supports no 4-forms and the second term is zero since $\eta$ and $p^*(\a)$ are one-forms. It follows that
\[ p^*(h)\cup [p^*(\a)\wedge \eta]=\frac{1}{2}\psi^2>0\in H^4(M;\R).\]
 Recall that
the map $p_*:H^4(M;\R)\to H^3(N;\R)$ is an orientation preserving  isomorphism.
In particular  we therefore get that \[ h\cup [\a]
=p_*(p^*(h)\cup [p^*(\a)\wedge \eta]) > 0  \in H^3(N;\R).\]
We will prove the following lemma in Section \ref{section:nonzero}.

\begin{lemma} \label{lem:nonzero}
Given $h\in H^2(N;\R)$ with $h\cup [\a]> 0$ we can find a representative $\Omega$ of $h$ such that
$\Omega\wedge \a> 0$ everywhere.
\end{lemma}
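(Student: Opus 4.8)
The plan is to translate the cohomological hypothesis into a pointwise positivity condition along the foliation defined by $\a$, and then to obtain $\Omega$ by approximating $\a$ by a fibration and applying the classical fibrewise construction.

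First I would record the pointwise reformulation. Since $\a$ is closed and nowhere zero, the $2$--plane field $\ker\a$ is integrable and defines a cooriented codimension-one foliation $\fol$ of $N$; orient each plane so that a positively oriented basis $(e_1,e_2)$ of $\ker\a_x$, followed by any $V$ with $\a(V)=1$, is positively oriented in $T_xN$. For such a frame one computes
\[ (\Omega\wedge\a)(e_1,e_2,V)=\Omega(e_1,e_2), \]
so that $\Omega\wedge\a>0$ everywhere if and only if $\Omega$ restricts to a positive area form on every leaf of $\fol$. It therefore suffices to produce a closed representative of $h$ that is positive on the leaves of $\fol$.

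Next I would reduce to the fibred case. The conditions that $\a$ be closed and nowhere vanishing are open in the $C^1$--topology on forms, and $h\cup[\a]>0$ is open in $[\a]$; moreover $[\a]$ lies in the open cone on a fibred face, in which rational classes are dense. Hence I can choose a closed $1$--form $\a'$ with rational cohomology class, $C^1$--close to $\a$, which is still nowhere zero and still satisfies $h\cup[\a']>0$ (for instance by adding to $\a$ the small harmonic representative of a nearby rational class). By Tischler's theorem $\a'$ is, after rescaling, the pullback of $d\theta$ under a fibration $q:N\to S^1$ with compact oriented fibre $F$; as $[F]$ is a positive multiple of the Poincar\'e dual of $[\a']$, we get $\langle h,[F]\rangle>0$. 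Since $F$ is a closed oriented surface and $\langle h,[F]\rangle>0$, the fibrewise construction of Thurston \cite{Th76} (the rational case already treated in \cite{FGM91}) yields a closed $2$--form $\Omega$ with $[\Omega]=h$ whose restriction to each fibre is a positive area form, i.e. $\Omega\wedge\a'>0$ everywhere by the previous paragraph applied to $\a'$.

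Finally I would transfer positivity back to $\a$. By compactness of $N$ there is $c>0$ with $\Omega\wedge\a'\ge c\cdot\mathrm{vol}$, while $\Omega\wedge(\a-\a')$ is uniformly as small as we like once $\a-\a'$ is $C^0$--small; hence $\Omega\wedge\a=\Omega\wedge\a'+\Omega\wedge(\a-\a')>0$ everywhere, as required. I expect the main obstacle to be the fibrewise existence step, together with arranging the auxiliary form $\a'$ so that it respects all three open conditions (nowhere-vanishing, rational class, positive pairing with $h$) simultaneously; the concluding transfer is then a soft compactness argument. Alternatively one could work directly on $\fol$ via Sullivan's duality between leafwise-positive closed forms and foliation cycles, the crux there being to show that every foliation cycle of $\fol$ is a nonnegative multiple of the Poincar\'e dual of $[\a]$, so that $h\cup[\a]>0$ forces positivity on all foliation cycles.
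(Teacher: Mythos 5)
Your first two steps are sound: the pointwise reformulation ($\Omega\wedge\a>0$ iff $\Omega$ restricts to a positive area form on the leaves of $\fol$) is the right way to read the conclusion, and the reduction to a nearby rational class $[\a']$ followed by Tischler's theorem and Thurston's fibrewise patching correctly reproduces the case already treated in \cite{FGM91}. The genuine gap is in the final transfer step, and it is not the soft compactness argument you claim. What you need is $\|\Omega\|_{C^0}\cdot\|\a-\a'\|_{C^0}<c$, where $c=\min_N(\Omega\wedge\a'/\mathrm{vol})>0$; but $\Omega$, hence both $c$ and $\|\Omega\|_{C^0}$, are produced only \emph{after} $\a'$ has been fixed, at which point $\|\a-\a'\|_{C^0}$ can no longer be shrunk. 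To close the argument you would need a bound on $\|\Omega\|_{C^0}/c$ that is uniform as $[\a']\to[\a]$, and the construction provides none: when $[\a]$ is irrational, the integral form $\lambda\a'$ defining the fibration has $\lambda\to\infty$, the fibres are surfaces of unbounded genus and area becoming dense in $N$, and the primitives $\lambda_j$ in Thurston's correction term $d\big(\sum_j(\phi_j\circ q)\lambda_j\big)$ carry no uniform $C^0$ control along this degeneration. As written, your argument therefore only establishes the lemma when $[\a]$ itself is rational.

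The paper avoids this by rationalizing $h$ rather than $\a$. It keeps the given (possibly irrational) $\a$ fixed throughout: for integral $h$ it uses the flow of a vector field dual to $\a$ (Lemma \ref{lem1}) to produce, through every point of $N$, an embedded curve dual to $h$ and transverse to $\fol$, and then sums Thom forms of tubular neighbourhoods adapted to $\fol$ to get a representative that is leafwise positive; a general real $h$ is then written as a nonnegative combination $\sum a_ih_i$ of integral classes with $h_i\cup[\a]>0$ (Lemma \ref{lem:integral}), which is harmless because leafwise positivity is preserved under nonnegative combinations of forms against the \emph{same} $\a$ --- in contrast to your perturbation of $\a$, where positivity is not stable without quantitative control. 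To rescue your route you would either need such quantitative control of Thurston's construction along the degenerating fibrations, or you should develop your alternative Sullivan-type argument, where the crux you correctly identify --- that every foliation cycle of $\fol$ is homologous to a nonnegative multiple of $PD([\a])$ --- still requires proof.
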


It is now clear that the following claim concludes the proof of Theorem \ref{thm:const}.
\begin{claim}
\[ \w=p^*(\Omega)+p^*(\a)\wedge \eta \]
is an $S^1$--invariant symplectic form on $M$ which represents $\psi$.
\end{claim}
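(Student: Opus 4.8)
The plan is to verify two things: that $\w=p^*(\Omega)+p^*(\a)\wedge \eta$ is a closed 2-form, and that it is nondegenerate (equivalently $\w^2>0$ pointwise), and then to record that $[\w]=\psi$ and that $\w$ is $S^1$-invariant. The cohomology class and invariance are essentially free: we have arranged $p^*(h)=\psi-[p^*(\a)\wedge\eta]$, and $\Omega$ represents $h$, so $[\w]=p^*(h)+[p^*(\a)\wedge\eta]=\psi$; invariance follows because $\eta$ is $S^1$-invariant by its first defining property, $p^*(\Omega)$ and $p^*(\a)$ are pullbacks under the quotient map and hence invariant, and wedge and pullback preserve invariance. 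So the genuine content is closedness and nondegeneracy.

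First I would check closedness. We compute $d\w=d p^*(\Omega)+d(p^*(\a)\wedge\eta)$. Since $\Omega$ represents a class in $H^2(N;\R)$ we may take $\Omega$ closed (the representative produced by Lemma \ref{lem:nonzero} can be assumed closed, or one notes $d\Omega$ is exact and adjusts; in any case $dp^*(\Omega)=p^*(d\Omega)=0$). For the second term, $d(p^*(\a)\wedge\eta)=p^*(d\a)\wedge\eta - p^*(\a)\wedge d\eta$. Here $d\a=0$ because $\a$ is a closed $1$-form on $N$, and $d\eta=p^*(\g)=p^*(\b\wedge\a)$, so $p^*(\a)\wedge d\eta=p^*(\a\wedge\b\wedge\a)=0$ since $\a\wedge\a=0$. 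Hence $d\w=0$. This is exactly the computation already flagged in the text when it observed $d(p^*(\a)\wedge\eta)=0$.

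Next I would verify nondegeneracy by showing $\w^2>0$ everywhere, which for a $2$-form on a $4$-manifold is equivalent to nondegeneracy (it forces $\w$ to have full rank $4$ at each point). Expanding,
\[
\w^2=\big(p^*(\Omega)+p^*(\a)\wedge\eta\big)^2
=p^*(\Omega\wedge\Omega)+2\,p^*(\Omega)\wedge p^*(\a)\wedge\eta+p^*(\a)\wedge\eta\wedge p^*(\a)\wedge\eta.
\]
The first term vanishes because $\Omega\wedge\Omega$ is a $4$-form on the $3$-manifold $N$, hence zero, and its pullback is zero. The last term vanishes because $p^*(\a)$ and $\eta$ are $1$-forms, so $p^*(\a)\wedge\eta\wedge p^*(\a)=-p^*(\a)\wedge p^*(\a)\wedge\eta=0$. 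Thus $\w^2=2\,p^*(\Omega\wedge\a)\wedge\eta$. By Lemma \ref{lem:nonzero} we have $\Omega\wedge\a>0$ everywhere on $N$, i.e. it is a positive multiple of the chosen volume form of $N$ at each point; pulling back and wedging with $\eta$, which by properties (1)--(2) is strictly positive on the fiber direction, yields a positive multiple of the volume form of $M$ at every point, so $\w^2>0$ pointwise.

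The only step requiring care — and the main obstacle — is the final pointwise positivity argument: one must check that $p^*(\Omega\wedge\a)\wedge\eta$ is a positive volume form \emph{at every point}, not merely cohomologically. The key point is that at each $x\in M$ the tangent space splits (using $\eta$) into the fiber direction and the horizontal lift of $T_{p(x)}N$; on the horizontal part $p^*(\Omega\wedge\a)$ restricts to $\Omega\wedge\a$, which Lemma \ref{lem:nonzero} guarantees is a positive $3$-form, and $\eta$ evaluates positively on the fiber direction by its normalization, so the product is a positive $4$-form compatibly with the orientation on $M$ fixed via $p_*$. Once this is in place, $\w$ is closed and nondegenerate, hence symplectic, is $S^1$-invariant, and represents $\psi$, completing the proof of Theorem \ref{thm:const}.
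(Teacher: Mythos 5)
Your proposal is correct and follows essentially the same route as the paper: the invariance, closedness and cohomology-class checks are identical, and your identity $\w^2=2\,p^*(\Omega\wedge\a)\wedge\eta$ is just the coordinate-free form of the paper's evaluation on the adapted basis $a,b,c,d$. The one genuine difference is the last step. You argue pointwise positivity directly, by splitting $T_xM$ via $\eta$ into vertical and horizontal parts and checking that the resulting $4$-form is positively oriented; this works, but it forces you to verify that the ordering (horizontal basis, fiber vector) is compatible with the orientation of $M$, i.e. to unwind the convention that $N$ carries the orientation induced by $p_*$. The paper sidesteps exactly this bookkeeping: it only checks that $\w\wedge\w$ is \emph{nonvanishing} at every point, and then upgrades nonvanishing to positivity by observing that $\w\wedge\w$ represents the class $\psi^2>0$, so on the connected manifold $M$ the nowhere-zero top form must have constant sign, and the sign is positive because its integral is. Your version buys a self-contained pointwise argument at the cost of an orientation check you flag but do not fully carry out; the paper's version buys robustness against sign conventions at the cost of invoking the global hypothesis $\psi^2>0$ one more time. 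Either is acceptable.
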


It is clear that $\w$ is $S^1$--invariant. We compute
\[ d\w=d(p^*(\Omega)+p^*(\a)\wedge \eta)=p^*(\a)\wedge d\eta=p^*(\a\wedge \g)=p^*(\a\wedge \a\wedge \b)=0,\]
i.e. $\w$ is closed.
 Also note that
\[ \psi=p^*(h)+[p^*(\a)\wedge \eta]=[p^*(\Omega)+p^*(\a)\wedge \eta],\]
i.e. $\w$ represents $\psi$.
It remains to show that $\w\wedge \w$ is positive
everywhere. For any point $q \in M$ pick a
basis $a,b,c,d$ for the tangent space $T_{q}M$ such that: \bn
\item $p_*(a),p_*(b)$ are a basis for the
tangent space $\mbox{ker} \a|_{p(q)}$ of a leaf of the foliation on $N$ determined by $\a$; put differently, $\a(p_*(a))=\a(p_*(b))=0$ and $p_*(a), p_*(b)$ are linearly independent,
\item $\a(p_*(c)) >  0$,
\item $d$ is tangent to the fibers of the $S^1$--fibration $M\to N$ and $\eta(d) > 0$.
\en Note that $p_*(d)=0$ and $p^*(\a)$ vanishes on $a,b,d$. It is now easy to see that
\[ \ba{rcl}(\w\wedge \w)(a,b,c,d)
&=&2(p^*(\Omega) \wedge p^*(\a)\wedge \eta)(a,b,c,d)\\
&=&2p^*(\Omega)(a,b)\cdot p^*(\a)(c)\cdot \eta(d)
=2\Omega(p_*(a),p_*(b))\cdot \a(p_*(c))\cdot \eta(d)\\
&=&2(\Omega\wedge \alpha)(p_*(a),p_*(b),p_*(c))\cdot \eta(d).\ea\]
Since $\Omega\wedge \alpha$ is a non-zero 3-form and since $p_*(a),p_*(b),p_*(c)$ form a basis for the tangent space of $N$ we see that
the last expression is in fact non-zero. This shows that $\w\wedge \w$ is non-zero everywhere, but since $\w \wedge \w$ represents the positive class $\psi^2$ we see that $\w \wedge \w$ is in fact positive throughout.
 This
concludes the proof of the claim and hence the proof of Theorem \ref{thm:const}

\subsection{Non--degenerate closed 1--forms and dual curves} \label{section:aux1}

Throughout this section $\a$ will be a non--degenerate closed 1--form on $N$. Note that $\a$
(or strictly speaking $\ker(\a)$)
defines a foliation which we denote by $\fol$.
Before we can prove Lemmas \ref{lem:beta} and \ref{lem:nonzero} we need a preliminary result regarding
representability of homology classes in $N$ by smooth embedded curves transverse to, or contained in a leaf of, the foliation $\fol$. The following Lemma is presumably known (its existence is discussed e.g. in \cite{Lt87}) but we include a proof for completeness.

\begin{lemma} \label{lem1}
Let $\a$ be a non--degenerate closed 1--form on $N$ with corresponding foliation $\fol$ and let $p\in N$.
For every $h\in H^2(N;\Z)$ with $h\cup [\a]\neq 0$ (respectively, $h\cup [\a]=0$) there exists a smoothly embedded closed (possibly disconnected) curve $c$ with
$PD([c])=h$ transverse to (respectively, contained in a leaf of) the foliation $\fol$ and that goes through $p$.
\end{lemma}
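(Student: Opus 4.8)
The plan is to translate the problem into homology via Poincar\'e duality and then reduce to a statement about how an embedded curve sits relative to the ``height'' one--form $\a$. Since $N$ is a closed oriented $3$--manifold, $PD$ identifies $h\in H^2(N;\Z)$ with a class in $H_1(N;\Z)$, which I represent by a smoothly embedded oriented closed (possibly disconnected) curve $c$. Under this identification the cup product is computed by a Kronecker pairing,
\[ h\cup[\a]=\langle [\a],[c]\rangle=\int_c \a, \]
so the hypothesis $h\cup[\a]\neq 0$ (resp. $=0$) says exactly that the total $\a$--period of $c$ is nonzero (resp. zero). To arrange that $c$ passes through the prescribed point $p$ I isotope $c$ to run near $p$ and then drag a short sub--arc through $p$; in the degenerate case $h=0$ I instead take $c$ to be a small null--homologous loop through $p$ lying in the leaf through $p$, which already settles that case.

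Next I put $c$ in general position with respect to $\fol$. Writing $\lambda(t)=\a(\dot c(t))$, transversality to $\fol$ means $\lambda$ is nowhere zero, while lying in a leaf means $\lambda\equiv 0$. A generic small perturbation makes the zeros of $\lambda$ finite and simple, so that $c$ has only finitely many nondegenerate tangencies with the leaves, each a fold where the $\a$--height of $c$ has a local extremum. The remaining content is to remove or redistribute these tangencies according to the two cases.

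For the transverse case $\int_c\a\neq 0$ I first band--sum the components of $c$ (preserving the oriented homology class) into a single embedded circle of nonzero total period, and then cancel the tangencies in pairs. Any sub--arc of $c$ on which $\lambda<0$ is bounded by two tangencies; using a fixed transverse vector field $Z$ with $\a(Z)=1$ together with the two--dimensional room available inside the leaves, I reroute such a descending sub--arc to an ascending one with the same endpoints, rel endpoints and keeping $c$ embedded, thereby deleting the two bounding tangencies. The nonvanishing of $\int_c\a$ is precisely what guarantees that this can be carried out globally until $\lambda>0$ throughout, yielding a transverse curve. It is cleanest to make this precise by invoking Tischler's theorem to replace $\a$ by a nearby nonsingular closed one--form with rational cohomology class, so that the relevant ``height'' becomes a fibration $f\colon N\to S^1$; then $f|_c$ is a circle map of degree $d\neq 0$, which is homotopic to a covering map (a submersion), and since a generic homotopy of a curve in a $3$--manifold is an ambient isotopy this homotopy is realized by an isotopy of the embedded $c$. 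I perform all modifications away from $p$ so that $c$ still meets $p$ along a transverse arc.

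For the leaf case $\int_c\a=0$ the total $\a$--winding of $c$ vanishes, so the $\a$--height is homotopically trivial along each component and $c$ can be flattened into a single leaf: sliding $c$ along the flow of $Z$ collapses the height variation, and since the net variation is zero I can push $c$ entirely into one leaf, which I take to be the leaf through $p$. The main obstacle in both cases is the embedded, homology--preserving local surgery --- cancelling a fold, or flattening into a leaf, without creating self--intersections and while keeping the curve in its homology class and through $p$ --- together with the analytic subtlety that for irrational $[\a]$ the leaves may be dense; the Tischler reduction to a genuine fibration is the device I would use to control both difficulties.
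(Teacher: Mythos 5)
Your reduction via Poincar\'e duality and the identification $h\cup[\a]=\int_c\a$ is the right starting point (it is also how the paper begins), but the core surgery step in your transverse case cannot work as described. Since $\a$ is closed, the integral $\int_\delta\a$ over an arc $\delta$ is an invariant of its homotopy class rel endpoints; consequently, replacing a descending sub--arc (negative integral) by an ascending arc (positive integral) with the same endpoints strictly increases $\int_c\a$. But if the modified curve $c'$ still satisfied $PD([c'])=h$, you would have $\int_{c'}\a=h\cup[\a]=\int_c\a$, a contradiction. So every such rerouting necessarily changes the homology class of $c$ --- and since each rerouting adds a class pairing \emph{positively} with $[\a]$, the errors can never cancel against each other. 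Fold cancellation rel endpoints is therefore not merely hard to keep embedded; it is homologically impossible. The paper avoids this by a \emph{free} homotopy of the whole curve: taking the vector field $v$ with $\a(v)=1$, Cartan's formula gives $L_v\a=0$, so the flow $F_s$ of $-v$ preserves $\a$; flowing each point $\gamma(t)$ for the time $\Phi(t)=\int_{\gamma|_{[0,t]}}\a-mt$ produces a loop $\ti{\gamma}$ with $\a(d\ti{\gamma}/dt)\equiv m$, which is transverse to $\fol$ when $m\neq 0$ and everywhere tangent (hence contained in a leaf) when $m=0$; a final general--position perturbation restores embeddedness.

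Your fallback, Tischler's theorem, has two further problems. First, it cannot touch the leaf case at all: lying in a leaf is a condition on the foliation of the \emph{original} $\a$, and it is destroyed by an arbitrarily small perturbation of the form (nor does $h\cup[\a]=0$ persist for the perturbed form); yet the dense--leaf situation you invoke Tischler to avoid arises precisely in that case. Your direct leaf--case argument also contains a false inference: vanishing of the total period $\int_c\a$ does not imply vanishing of the period on each component, so you must first join the components into a single embedded curve before flattening, and after pushing into a two--dimensional leaf you must resolve self--intersections, possibly increasing the number of components (which the lemma permits). Second, the assertion that ``a generic homotopy of a curve in a $3$--manifold is an ambient isotopy'' is false --- homotopic knots need not be isotopic --- although this particular defect is repairable here, since only the homology class matters and an immersed curve transverse to $\fol$ can be made embedded by general position. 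The paper's single flow argument handles both cases uniformly and needs none of these repairs.
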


\begin{proof}
Let $\a$ be a non--degenerate closed 1--form on $N$ with corresponding foliation $\fol$.
We first pick a metric $g$ on $N$. We let $v'$ be the unique vector field on $N$ with the property that for any $p\in N$ and any $w\in T_pN$ we have
$g(v'(p),w)=\a(w)$. Note that this implies that $\a(v'(p))\ne 0$ for all $p$. We then define a new vector field $v$ by
\[ v(p)=\frac{v'(p)}{\a(v'(p))}.\]
Note that $\a(v(p))=1$ for all $p\in N$.
We denote by $F:N\times \R\to N$ the flow corresponding to $-v$, i.e. for any $p\in N, \,\ s\in \R$ we have
\begin{equation} \label{equ:flow} \frac{\partial}{\partial t} F(p,t)|_{t=s}=-v(F(p,s))\end{equation} with initial condition $F(p,0) = p$ (as $N$ is compact, the flow is defined for all $s \in \R$). Observe that Cartan's formula implies that $L_{v}\a = d( i_v \a )+ i_v (d \a) = d(1) = 0$. It follows that \[ \frac{d}{ds} (F_{s}^{*} \a) = 0,\] where $F_{s} : N \to N$ is the map defined by $F_{s}(q) = F(q,s)$, hence \[ F_{s}^{*} \a = F_{0}^{*} \a  = \a.\]

We will repeatedly make use of the following formula: Given a path $(\gamma,\rho): [0,1] \to N \times \R$, by the chain rule the induced path $\eta := F(\gamma,\rho): \R \to N$ has tangent vector \[ \frac{d \eta}{dt} = \frac{d}{dt} F(\gamma(t),\rho(t)) = (F_{\rho(t)})_{*} (\frac{d \gamma}{dt}) + \frac{\partial}{\partial s} F(\gamma(t),s)|_{s = \rho(t)} \frac{d\rho}{dt}  \]
 and as usual the derivatives at the endpoints are interpreted  as one-sided. Using Equation (\ref{equ:flow}) we can rewrite this vector as \begin{equation} \label{equ:vector} \frac{d {\eta}}{dt}  = (F_{\rho(t)})_{*} \frac{d \gamma}{dt} - v (\eta(t)) \frac{d \rho}{dt} \in T_{\eta(t)}N. \end{equation}

Let now $\gamma:[0,1]\to N$ be any smoothly embedded loop with $\gamma(0)=\gamma(1)=p$ whose image
(which by abuse of notation  we will also denote by $\gamma$), is dual to a class $h \in H^2(N;\Z)$ such that $h \cup [\a] = \int_{\gamma}\a = m \in \R$. Let $\rho(t) = mt$ and denote, as above, $\eta(t) = F(\gamma(t),mt)$. Define a map $\Phi : [0,1] \to \R$ as $\Phi(t) = \int_{\eta|_{[0,t]}}\a$, where $\eta|_{[0,t]}$ denotes the restriction of the map $\eta:[0,1]\to N$ to the interval $[0,t]$.
Note that by Equation (\ref{equ:vector}) \[ \frac{d \Phi}{dt} = \a(\frac{d {\eta}}{dt}) = \a \big((F_{mt})_{*} \frac{d \gamma}{dt} - mv\big). \] Using the identities $\a((F_{s})_{*}) = F_{s}^{*} \a = \a$ and $\a(v) = 1$ we get therefore
\[ \frac{d \Phi}{dt}  = \a \big(\frac{d \gamma}{dt}\big) - m.\] In particular it follows that
 \[\Phi(1) = \int_{\eta} \a = \int_{0}^{1} \a\big(\frac{d {\eta}}{dt}\big)dt = \int_{0}^{1}
 \a ((F_{mt})_{*} \frac{d \gamma}{dt} - mv)dt = \int_{0}^{1} \a\big(\frac{d \gamma}{dt}\big)dt - m = \int_{\gamma}\a - m = 0.\]

We consider now the following homotopy
\[  \ba{rcl}
H:[0,1]\times [0,1]&\to&N \\
(t,s)&\mapsto&F(\gamma(t),s\Phi(t)).\ea \]
This is clearly a smooth map.
Since $\Phi(1)=0$ this descends in fact to a homotopy $H:S^1\times [0,1]\to N$.
Note that $H(t,0)=\gamma(t)$ for all $t$.
We now consider the path ${\tilde \gamma}(t)$ defined by ${\tilde \gamma}(t)=H(t,1)$. Note that ${\tilde \gamma}(0)={\tilde \gamma}(1)=p$. The map ${\tilde \gamma}(t)$ is smooth, and we claim that the image ${\tilde \gamma}$ of ${\tilde \gamma}(t)$ is transverse to the foliation $\fol$ if $m \neq 0$, and is contained in the leaf through $p$ if $m = 0$.

In fact, as ${\tilde \gamma}(t) = F(\gamma(t),\Phi(t))$, we have by Equation (\ref{equ:vector}) \[ \frac{d {\tilde \gamma}}{dt} = (F_{\Phi(t)})_{*} (\frac{d \gamma}{dt}) - v({\tilde \gamma}(t)) \frac{d\Phi}{dt} \in T_{{\tilde \gamma}(t)}N. \]
Applying $\alpha$ pointwise we get \[\alpha(\frac{d {\tilde \gamma}}{dt}) = (F_{\Phi(t)}^{*} \a) (\frac{d \gamma}{dt}) - \alpha(v) \frac{d\Phi}{dt} = \alpha(\frac{d \gamma}{dt}) - \frac{d\Phi}{dt} = \alpha(\frac{d \gamma}{dt}) - \a (\frac{d \gamma}{dt}) + m = m,  \] so $\frac{d {\tilde \gamma}}{dt}$ is pointwise transverse to or contained in $\mbox{ker } \a$ depending on the value of $m$.

Note that ${\tilde \gamma}$ may have self--intersection and (when $m = 0$) may fail to be an immersion. However, using a local model, we can us a general position argument to
further homotope ${\tilde \gamma}$ (at the price perhaps of increasing the number of components, when ${\tilde \gamma}$ sits on a leaf) to get the curve $c$ that satisfies the conclusions of the Lemma.

\end{proof}

\subsection{Proof of Lemma \ref{lem:beta}} \label{section:beta} \label{section:aux2}

We are now ready to prove the first of the two auxiliary lemmas, i.e. we will prove the following claim.

\begin{claim}
Let $\a$ be a non--degenerate closed 1--form on $N$ and $e\in H^2(N;\Z)$ such that $e\cup [\a]=0$.
There exists a 1--form $\b$ on $N$ such that $\a\wedge \b$ is closed and $[\b\wedge \a]=e\in
H^2(N;\R)$.
\end{claim}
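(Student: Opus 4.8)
The plan is to realize the class $e$ by an embedded curve lying in a leaf of $\fol$ and then to build $\b$ so that $\b\wedge\a$ is a Thom form for that curve, arranged so that one of the two normal directions of the curve is exactly the direction picked out by $\a$. Since $e\cup[\a]=0$, Lemma \ref{lem1} (applied to the integral class $h=e$) produces a smoothly embedded closed curve $c\subset N$ with $PD([c])=e$ that is contained in a single leaf $L$ of $\fol$; this ``contained in a leaf'' conclusion is the geometric shadow of the hypothesis $e\cup[\a]=0$, and it is what will let us factor a copy of $\a$ out of the dual form. Handling the (possibly several) components of $c$ separately and summing the resulting forms, I may assume $c$ is connected.

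Next I would set up adapted coordinates on a tubular neighborhood of $c$. The leaf $L$ is an oriented surface, so the embedded circle $c$ has trivial normal line bundle inside $L$, giving an embedding $\sigma\colon c\times(-\eps,\eps)\to L$ with $\sigma(\cdot,0)=c$; write $s$ for the second coordinate. For the direction transverse to $\fol$ I would use the vector field $v$ and its flow $F$ from the proof of Lemma \ref{lem1}, for which $\a(v)=1$ and $F_s^{*}\a=\a$. For $\eps,\delta$ small the map $\Psi(\theta,s,z)=F(\sigma(\theta,s),z)$ is an embedding onto a tubular neighborhood $U$ of $c$, and since $\a$ vanishes on vectors tangent to $L$ while $\a(v)=1$ and $F_s^{*}\a=\a$, one computes $\Psi^{*}\a=\pm dz$; reparametrizing $z\mapsto-z$ if needed, $\Psi^{*}\a=dz$ on $U$. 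I then define $\b$ by $\Psi^{*}\b=\mu(s,z)\,ds$ on $U$ and $\b=0$ off $U$, where $\mu$ is smooth, independent of $\theta$, compactly supported in $(-\eps,\eps)\times(-\delta,\delta)$, with $\iint\mu\,ds\,dz=1$; compact support makes $\b$ a global smooth $1$--form. Now $\Psi^{*}(\b\wedge\a)=\mu(s,z)\,ds\wedge dz$ is independent of $\theta$, so $\b\wedge\a$, and hence $\a\wedge\b=-\b\wedge\a$, is closed on $U$ and trivially off $U$; moreover $\b\wedge\a$ is a closed $2$--form supported in $U$ integrating to $\iint\mu\,ds\,dz=1$ over each normal disk $\{\theta=\theta_0\}$. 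By the standard description of the Thom class (\cite{BT82}), such a form represents $PD([c])=e$, once the orientation of the normal disk and the sign of $\mu$ are fixed so that this integral is $+1$. Thus $\a\wedge\b$ is closed and $[\b\wedge\a]=e\in H^2(N;\R)$, as required.

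The main obstacle is the coordinate step: producing, along all of $c$ at once, a neighborhood in which $\a$ equals exactly $dz$. This rests on the triviality of the full normal bundle of $c$ in $N$ --- it splits as the in-leaf normal line (trivial by orientability of $L$) plus the transverse line (trivialized by $v$) --- together with the fact that the flow $F$ straightens $\a$ to $dz$ rather than to a merely positive multiple of $dz$. The identities $\a(v)=1$ and $L_v\a=0$ from the proof of Lemma \ref{lem1} are precisely what yield the latter. Once these coordinates are in hand, the construction of $\b$ and the verification of closedness and of the cohomology class are routine.
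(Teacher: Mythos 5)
Your proof is correct and follows essentially the same route as the paper: both realize $e$ by an embedded curve in a leaf via Lemma \ref{lem1}, build a foliation-adapted tubular neighborhood of that curve, and take $\b$ to be a bump one-form in the in-leaf normal direction so that $\b\wedge\a$ becomes a Thom form for the curve. The only cosmetic differences are that you normalize the transverse coordinate so that $\a$ pulls back exactly to $dz$ (the paper allows $f_i^*\a=r_i\,dy$ and absorbs the constant into $\rho_i$) and that you cite the Thom class description where the paper verifies $[\b\wedge\a]=e$ by computing intersection numbers with embedded surfaces.
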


By Lemma \ref{lem1} we can  find an oriented smoothly embedded curve $c$ dual to $e\in H^2(N;\Z)$ such that $\a|_c\equiv 0$.
We denote the components of $c$ by $c_1,\dots,c_m$.
We now consider $S^1\times D^2$ with the coordinates $e^{2\pi it}, x,y$ and
we orient $S^1\times D^2$ by picking the equivalence class of the basis $\{\partial_x,\partial_y,\partial_t\}$.

Using the orientability of the $N$ and of the leaves of the foliation we use a standard argument to show that for $i=1,\dots,m$ we can pick a map
\[ f_i:S^1\times D^2 \to N\]
with the following properties:
\bn
\item $f_i$ is an orientation preserving diffeomorphism onto its image.
\item $f_i$ restricted to $S^1\times 0$ is an orientation preserving diffeomorphism onto $c_i$.
\item $\a((f_i)_*(\partial_t))=0$.
\item $\a((f_i)_*(\partial_x))=0$.
\item There exists an $r_i\in (0,\infty)$ such that $\a((f_i)_*(\partial_y))=r_i$ everywhere.
\en
Note that (3), (4) and (5) are equivalent to $f_i^*(\a)=r_i\cdot dy$.

For $i=1,\dots,m$ we now pick a function $\rho_i:D^2\to \R_{\geq 0}$ such that the closure of the support of $\rho_i$ lies in the interior of $D^2$ and such that \[ \int_{D^2}\rho_i (x,y) dx \wedge dy =\frac{1}{r_i}. \]
We define the following 1--form on $S^1\times D^2$:
\[ \b'_i(t,x,y)=\rho_i(x,y)\cdot dx.\]
Note that
\be \label{equ:closed} d(\b'_i\wedge f_i^*(\a))=d(\b'_i\wedge r_i\cdot  dy)=d(r_i\rho_i(x,y)\cdot dx \wedge dy)=0.\ee
Furthermore for any $z\in S^1$ we have
\be \label{equ:one} \int_{z \times D^2} \b'_i\wedge f_i^*(\a)=\int_{z \times D^2} r_i\rho_i(x,y)\cdot dx \wedge dy=1.\ee
For $i=1,\dots,m$ we now define the following 1--form on $N$:
\[ \b_i(p)=\left\{ \ba{rl} 0,& \mbox{ if }p\in N\sm f_i(S^1\times D^2)\\
(f_i^{-1})^*(\b'_i(q)),& \mbox{ if }p=f_i(q) \mbox{ for some }q\in S^1\times D^2.\ea \right.\]
Furthermore we let $\b=\sum_{i=1}^m \b_i$.
We claim that $\b$ has all the required properties.

First note that $\b$ is $C^\infty$ by our condition on the support of $\rho_i$.
Furthermore it follows immediately from (\ref{equ:closed}) that $\b \wedge \a$ is closed.
Finally we have to show that $\beta \wedge \a$ represents $e$.

In order to show that $\beta \wedge \a$ represents $e$ in $H^2(N;\R)=\hom(H_2(N;\Z),\R)$
it is enough to show that for any embedded oriented surface $S\subset N$, we have
\[ \int_{S} \beta \wedge \a=e([S]).\]
We first note that $e([S])=c\cdot s$.
It is therefore enough to show that
 for any embedded oriented surface $S\subset N$, we have
\[ \int_{S} \beta_i \wedge \a=c_i\cdot S.\]
In fact, given such a surface we can isotope $S$ in such a way that $S$ intersects the curve $c$ `vertically', i.e. we can assume that
\[ f_i(S^1\times D^2)\cap S=\coprod_{j=1}^{k} \eps_j \cdot f_i(z_1\times D^2) \]
for disjoint $z_i$ and $\eps_i\in \{-1,1\}$. We view this equality as an equality of oriented manifolds, where we give
$z_i\times D^2$ the orientation given by the basis $\{\partial_x,\partial_y\}$. In particular $S$ is transverse to $c_i$.
In this case we have
\[ c_i\cdot S=\sum_{j=1}^k \eps_j.\]
On the other hand it follows from (\ref{equ:one}) that
\[  \int_{S} \beta_i \wedge \a=\sum_{j=1}^k  \int_{\eps_j\cdot (z_j\times D^2)} f_i^*(\beta_i) \wedge f_i^*(\a)
=\sum_{j=1}^k  \int_{\eps_j\cdot (z_j\times D^2)} \beta_i' \wedge f_i^*(\a)
=\sum_{j=1}^k \eps_j.\]
This concludes the proof that $\beta$ has all the required properties.
\subsection{Proof of Lemma \ref{lem:nonzero}} \label{section:nonzero} \label{section:aux3}

The following claim is the last missing piece in the proof of Theorem \ref{thm:const}.

\begin{claim}
Let $\a$ be a non--degenerate closed 1--form on $N$.
Given $h\in H^2(N;\R)$ with $h\cup [\a]> 0$ we can find a representative $\Omega$ of $h$ such that
$\Omega\wedge \a> 0$ everywhere.
\end{claim}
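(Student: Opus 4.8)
The plan is to represent the class $h$ by a closed $2$--form that is positive against $\a$ on every tangent plane complementary to $\ker(\a)$, building the form out of a curve supplied by Lemma \ref{lem1}. Since $h\cup[\a]>0$ we have in particular $h\cup[\a]\neq 0$, so Lemma \ref{lem1} produces a smoothly embedded (possibly disconnected) closed curve $c$ with $PD([c])=h$ that is \emph{transverse} to the foliation $\fol$. Transversality means that along $c$ the form $\a$ is nonzero; after orienting each component of $c$ compatibly we may arrange $\a$ to be positive along $c$, and then $h\cup[\a]=\int_c\a>0$ records that the signed count is genuinely positive. The idea is now to manufacture $\Omega$ as a Thom form (a Poincar\'e dual representative supported in a tubular neighborhood of $c$) and check that the wedge with $\a$ is everywhere nonnegative, and strictly positive where it is supported.

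The key steps, in order, are as follows. First I would set up a tubular neighborhood of each component $c_i$ exactly as in the proof of Lemma \ref{lem:beta}: because $c$ is transverse to $\fol$, I can choose orientation--preserving parametrizations $f_i\colon S^1\times D^2\to N$ of disjoint tube neighborhoods so that the core $S^1\times 0$ maps to $c_i$ and the pullback of $\a$ is controlled. The natural normalization here (dual to the one used for Lemma \ref{lem:beta}) is to arrange that $\a$ is constant and positive in the core direction, i.e.\ $f_i^*(\a)=r_i\,dt$ for some $r_i>0$, with $\a$ vanishing on the disk directions $\partial_x,\partial_y$. Second, on each tube I would place a bump $2$--form $\Omega_i=\sigma_i(x,y)\,dx\wedge dy$, where $\sigma_i\colon D^2\to\R_{\ge 0}$ is supported in the interior of $D^2$ with total integral $1$; pushing forward and summing gives a global closed $2$--form $\Omega=\sum_i \Omega_i$, which is automatically closed since $\sigma_i\,dx\wedge dy$ is a top form on the disk factor. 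Third, the Poincar\'e dual computation is the same intersection--number argument as in Lemma \ref{lem:beta}: integrating $\Omega$ over any embedded oriented surface $S$ reduces, after isotoping $S$ to meet the tubes vertically, to the signed count $c\cdot S=h([S])$, so $[\Omega]=h$.

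The positivity step is the heart of the matter, and it is where I expect the one genuine subtlety. Inside each tube we compute $f_i^*(\Omega\wedge\a)=\sigma_i(x,y)\,dx\wedge dy\wedge r_i\,dt = r_i\,\sigma_i\,dx\wedge dy\wedge dt$, which is a nonnegative multiple of the chosen positive orientation $\{\partial_x,\partial_y,\partial_t\}$, hence $\Omega\wedge\a\ge 0$ pointwise and $>0$ precisely on the support of $\sigma_i$. The obstacle is that this only makes $\Omega\wedge\a$ positive where $\Omega$ is supported, not \emph{everywhere}, which is what the claim demands; a form concentrated on a tube vanishes on the complement. The fix is to add a suitable closed $2$--form that is exact up to the $\a$--term and everywhere positive against $\a$. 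The cleanest route is to exploit that $\a\wedge\a=0$ is degenerate but that $d\a=0$ lets us enlarge the support: one adds $t$ times a fixed closed $2$--form $\Omega_0$ with $\Omega_0\wedge\a>0$ everywhere and absorbs the change of cohomology class by scaling, \emph{provided} such an $\Omega_0$ exists. To see it does, observe that $\Omega_0$ positive against $\a$ everywhere exists iff the cohomological pairing with $[\a]$ is positive on a class represented by a closed form of the right sign — and since $h\cup[\a]>0$, a sufficiently large multiple of $h$ together with the bump construction above, smoothed out across all of $N$ by averaging along the flow $F_s$ used in Lemma \ref{lem1} (which preserves $\a$), spreads the positive form over the whole manifold while keeping $\Omega\wedge\a>0$. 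Thus the main work is the averaging/spreading argument that converts a locally positive representative into a globally positive one; the cohomology bookkeeping and the local positivity computation are routine.
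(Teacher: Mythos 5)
Your first two steps (the tube around a transverse curve from Lemma \ref{lem1}, the bump $2$--form $\rho\,dx\wedge dy$, and the intersection--number verification that $[\Omega]=h$) match the paper's construction. But the proposal breaks down exactly at the point you flag as the ``one genuine subtlety,'' and the fix you offer does not work. The existence of a closed $2$--form $\Omega_0$ with $\Omega_0\wedge\a>0$ \emph{everywhere} is precisely the statement being proved, so invoking it is circular unless you actually construct it; and the construction you sketch --- averaging the tube--supported form along the flow $F_s$ --- fails because, although $F_s^*\a=\a$ so that the averaged form still pairs nonnegatively with $\a$, the union of the flow orbits emanating from a single tube need not cover $N$ (the flow lines are transverse to the leaves, but their saturation of a small tube around one curve is in general a proper subset of $N$). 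So the averaged form is still only positive on a possibly proper subset, and the global positivity is not achieved.

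The idea you are missing is the reason Lemma \ref{lem1} is stated with the clause ``and that goes through $p$'': for an integral class $h$ one chooses, for \emph{every} point $p\in N$, a dual curve $c_p$ transverse to $\fol$ passing through $p$, with a tube $f_p:S^1\times D^2\to N$ around it. By compactness finitely many of the half--size tubes $f_{p_i}(S^1\times\frac12 D^2)$, $i=1,\dots,k$, cover $N$. Taking the bump function $\rho$ strictly positive on $\frac12 D^2$ with total integral $\frac1k$, each pushed--forward form $\Omega_i$ represents $\frac1k h$ and satisfies $\Omega_i\wedge\a\ge 0$, with strict positivity on $f_{p_i}(S^1\times\frac12 D^2)$; the sum $\Omega=\sum_i\Omega_i$ then represents $h$ and is positive against $\a$ everywhere because the half--tubes cover $N$. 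A second, smaller gap: Lemma \ref{lem1} applies only to integral classes, so you cannot feed it an arbitrary $h\in H^2(N;\R)$. The paper reduces to the integral case by writing $h=\sum_i a_ih_i$ with $a_i\ge 0$ and $h_i$ integral satisfying $h_i\cup[\a]>0$ (its Lemma \ref{lem:integral}, proved by an induction on the number of irrational coordinates); this reduction needs to be stated and justified, since positivity of $h\cup[\a]$ alone does not obviously yield such a decomposition.
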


We first consider the case that $h$ is represented by an integral class, i.e. by an element in the image of the map $H^2(N;\Z)\to H^2(N;\R)$.
Let $\fol$ be the foliation corresponding to $\a$.

 Using Lemma \ref{lem1} we can pick for each $p\in N$ a curve  $c_p$ transverse to $\fol$ which goes through $p$ and which represents $h$. Since $N$ is orientable we can  pick maps
\[ f_p:S^1\times D^2\to N \]
such that
\bn
\item $f_p$ is an orientation preserving diffeomorphism onto its image
(where we again view $S^1\times D^2$  with the orientation given by $\{\partial_x,\partial_y,\partial_t\}$).
\item $f_p$ restricted to $S^1\times 0$ is an orientation preserving diffeomorphism onto $c_p$.
\item $\a((f_p)_*(\partial_x))=0$.
\item $\a((f_p)_*(\partial_y))=0$.
\item  $\a((f_p)_*(\partial_t))>0$.
\en
Note that (3) and (4) are equivalent to saying that $(f_p)_*(\partial_x)$ and  $(f_p)_*(\partial_y)$ are tangent to the leaves of the foliation $\fol$.
Also note that on $S^1\times D^2$ we have $dx\wedge dy\wedge (f_p)^*(\a)\ne 0$.

By compactness we can find $p_1,\dots,p_k$ such that
\be \label{equ:cover} \bigcup\limits_{j=1}^k f_{p_i}\big(S^1\times \frac{1}{2}D^2\big)=N.\ee
We write $f_i=f_{p_i}, i=1,\dots,k$.
Now we pick a function $\rho:D^2\to \R_{\geq 0}$
such that  the following conditions hold:
\bn
\item $\int_{D^2}\rho =\frac{1}{k}$,
\item $\rho$ is strictly positive on $\frac{1}{2}D^2$, and
\item the closure of the support of $\rho$ lies in the interior of $D^2$.
\en

Let $\Omega'$ be the 2--form on $S^1\times D^2$ given by
\[ \Omega'(z,x,y)=\rho(x,y)dx\wedge dy.\]
Clearly $\Omega'$ is closed and for any $z\in S^1$ we have $\int_{z \times D^2}\Omega'=\frac{1}{k}$.
For $i=1,\dots,k$ we now define the following 2--form on $N$:
\[ \Omega_i(p)=\left\{ \ba{rl} 0,& \mbox{ if }p\in N\sm f_i(S^1\times D^2)\\
(f_i^{-1})^*(\Omega'(q)),& \mbox{ if }p=f_i(q) \mbox{ for some }q\in S^1\times D^2.\ea \right.\]
As in the proof of Lemma \ref{lem:beta} we see that $\Omega_i$ is smooth, $\Omega_i$ is closed and $[\Omega_i]=\frac{1}{k}h\in H^2(N;\R)$.
Now let $\Omega(h)=\sum_{i=1}^k\Omega_i$. Clearly $[\Omega(h)]=h\in H^2(N;\R)$, and it easily follows from (\ref{equ:cover})
and all the other conditions that
$\Omega(h)\wedge \a>0$  everywhere.
\\

We now turn to the general case, i.e. to the case that   $h\in H^2(N;\R)$ is not necessarily integral.

\begin{lemma} \label{lem:integral}
Let $h\in H^2(N;\R)$ with $h\cup [\a]>0$.  Then we can find $m\in \N$, integral $h_1,\dots,h_m$ and $a_1,\dots,a_m \in \R_{\geq 0}$ such that
$h_i\cup [\a]>0$ for all $i$ and such that
 $h=\sum_{i=1}^m a_ih_i$.
\end{lemma}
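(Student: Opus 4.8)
The plan is to translate the statement into convex geometry and prove it there. Set $V=H^2(N;\R)$, let $n=\dim V=b_2(N)$, and let $L\subset V$ be the image of $H^2(N;\Z)\to H^2(N;\R)$; since torsion maps to $0$, this is a full lattice of rank $n$. Cup product with $[\a]$ followed by evaluation on the fundamental class gives a linear functional $\lambda:V\to\R$, $\lambda(x)=\langle x\cup[\a],[N]\rangle$, and the hypothesis $h\cup[\a]>0$ says precisely that $\lambda(h)>0$; in particular $\lambda\neq0$. Writing $C=\{x\in V:\lambda(x)>0\}$ for the open half-space, the conclusion of the lemma is exactly that $h$ lies in $K:=\operatorname{cone}(L\cap C)$, the set of finite non-negative real combinations of integral classes on which $\lambda$ is positive. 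One inclusion is free: any combination $\sum a_ih_i$ with $a_i\ge0$ and $\lambda(h_i)>0$ has $\lambda(\sum a_ih_i)\ge0$, with equality only when all $a_i=0$, so $K\subseteq C\cup\{0\}$. All the content is in the reverse inclusion $C\subseteq K$, which I would establish as the key claim $K=C\cup\{0\}$.

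First I would record that $L\cap C$ spans $V$. Fixing any lattice basis $e_1,\dots,e_n$ of $L$ with $\lambda(e_1)>0$ (possible after reindexing and a sign change, since $\lambda\neq0$), for $M$ large we have $Me_1\in C$ and $Me_1+e_i\in C$ for every $i$, whence $e_i=(Me_1+e_i)-(Me_1)$ lies in the linear span of $L\cap C$. Thus $K$ contains a full simplicial cone and has non-empty interior. The main step is then to identify the dual cone $K^{\ast}=\{\xi\in V^{\ast}:\xi\ge0\text{ on }K\}$, and I claim $K^{\ast}=\R_{\ge0}\lambda$. The inclusion $\supseteq$ is clear since $\lambda>0$ on $L\cap C$. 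Granting the reverse, it follows from the bipolar theorem that $\overline{K}=K^{\ast\ast}=(\R_{\ge0}\lambda)^{\ast}=\{x:\lambda(x)\ge0\}$ is the closed half-space; since $K$ is convex with non-empty interior, the standard convexity fact $\operatorname{int}K=\operatorname{int}\overline{K}$ gives $\operatorname{int}K=C$, so $C\subseteq K$ as desired. Finally the conical Carathéodory theorem lets me take $m\le n$, yielding the required finite decomposition $h=\sum a_ih_i$.

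The crux, and the main obstacle, is the inclusion $K^{\ast}\subseteq\R_{\ge0}\lambda$: every functional $\xi$ that is non-negative on all integral classes in $C$ must be a non-negative multiple of $\lambda$. Suppose not. Evaluating on $e_1\in L\cap C$ shows that any multiple of $\lambda$ that $\xi$ could equal must be non-negative, so if $\xi$ fails to be such a multiple then $\operatorname{ker}\xi\neq\operatorname{ker}\lambda$ and there is $u\in\operatorname{ker}\lambda$ with $\xi(u)<0$. Then $C\cap\{\xi<0\}$ is a non-empty open cone: for a fixed $w\in L\cap C$ and large $T$ the point $Tu+w$ has $\lambda(Tu+w)=\lambda(w)>0$ but $\xi(Tu+w)=T\xi(u)+\xi(w)<0$. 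The point is now that any non-empty open cone in $V$ contains a lattice point of $L$: it contains a ball $B(x_0,r)$, hence the balls $B(\mu x_0,\mu r)$ for all $\mu>0$, and once $\mu r$ exceeds the finite covering radius of $L$ such a ball meets $L$. This produces $g\in L$ with $\lambda(g)>0$ and $\xi(g)<0$, i.e. $g\in L\cap C$ with $\xi(g)<0$, contradicting $\xi\in K^{\ast}$.

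I expect this lattice-point-in-an-open-cone step to be the genuine difficulty, and it is forced on us by the fact that $[\a]$ need not be a rational class: $\lambda$ may take irrational values on $L$, so $\operatorname{ker}\lambda$ can meet $L$ only at the origin. Consequently one cannot simply choose a basis of $L\cap C$ and take the simplicial cone it spans—that only fills a ``quadrant'' of the half-space, not all of $C$—nor can one reduce to a rational case by clearing denominators. Routing through the dual cone together with the covering-radius argument is precisely what sidesteps this irrationality issue; a more hands-on alternative would be, in the rational case $\lambda(L)=c\Z$, to use the ``cheap'' generators $e_0\pm Mf_k$ (with $f_k$ spanning the full sublattice $L\cap\operatorname{ker}\lambda$ and $\lambda(e_0)=c$ minimal positive), whose small fixed $\lambda$-value lets the $\operatorname{ker}\lambda$-directions be corrected at cost $O(1/M)$ while leaving ample budget on $e_0$—but this route requires the rational/irrational dichotomy, whereas the duality argument handles both uniformly.
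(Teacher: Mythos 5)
Your proof is correct, but it takes a genuinely different route from the paper's. The paper argues by a downward induction on the number of rational coordinates: it fixes a basis $e_1,\dots,e_n$ of $H^2(N;\Q)$ with $e_i\cup[\a]>0$ for all $i$, observes that the desired property is additive and holds for rational classes (which are non-negative multiples of integral ones), and then, given a class whose first $m$ coordinates are rational, perturbs the $(m+1)$-st coordinate by a small $r>0$ to make it rational while preserving positivity of the pairing, writing the original class as the perturbed class plus $r\,e_{m+1}$. Your argument instead packages everything into convex duality: you identify the cone $K$ generated by integral classes pairing positively with $[\a]$, compute its dual cone to be $\R_{\geq 0}\lambda$ via the lattice-point-in-an-open-cone (covering radius) step, and conclude $\operatorname{int}K=C$ from the bipolar theorem and the standard fact $\operatorname{int}K=\operatorname{int}\overline{K}$ for convex $K$ with non-empty interior. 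The paper's induction is more elementary and self-contained, needing only the choice of a positive rational basis and a one-coordinate perturbation; your approach invokes more machinery but is arguably more transparent about \emph{why} the statement is true (the dual cone of $K$ cannot be larger than the ray through $\lambda$ because an irrational hyperplane cannot separate the lattice from an open cone), and it yields the bound $m\leq b_2(N)$ via Carath\'eodory, which the paper's proof does not directly provide. Both arguments correctly confront the essential difficulty that $[\a]$ may be irrational, the paper by keeping the perturbation inside the positive cone spanned by the $e_i$, and you by the covering-radius argument.
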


We first show that Lemma \ref{lem:integral} implies Lemma \ref{lem:nonzero}. Indeed, given
$h\in H^2(N;\R)$ with $h\cup [\a]>0$ we pick integral $h_1,\dots,h_m$ and $a_1,\dots,a_m \in \R_{\geq 0}$ as above. Then we  define $\Omega(h_1),\dots,\Omega(h_m)$ as above. We let
\[ \Omega=\sum_{i=1}^m a_i \Omega(h_i).\]
We see that
\[ \Omega(h)\wedge \a= \sum_{i=1}^m a_i \Omega(h_i)\wedge \a>0\]
 everywhere. This concludes the proof of Lemma \ref{lem:nonzero} assuming Lemma \ref{lem:integral}.

 We now turn to the proof of Lemma \ref{lem:integral}.
It is easy to see that we can  pick a basis $e_1,\dots,e_n$ for $H^1(N;\Q)$ such that $e_i\cup [\a]>0$ for all $i=1,\dots,m$. We use this basis to identify  $H^2(N;\R)$ with $\R^n$.
 We  say that $h\in H^2(N;\R)$ with $h\cup [\a]>0$ has property ($*$) if  there exists $m\in \N$, integral $h_1,\dots,h_m$ and  $a_1,\dots,a_m \in \R_{\geq 0}$ such that
$h_i\cup [\a]>0$ for all $i$ and such that
 $h=\sum_{i=1}^m a_ih_i$. Note that if $h_1,h_2$ have property ($*$), then $h_1+h_2$ also has property ($*$).

Given $m\in \{0,\dots,n\}$ we now say \emph{$P(m)$ holds if
 ($*$)  holds for all $ g=(g_1,\dots,g_n)\in H^2(N;\R)=\R^n$ with $g_1,\dots,g_m\in \Q$.}
Clearly we have to show that $P(0)$ holds.
Note that $P(n)$ holds since any rational element of $H^2(N;\R)$ is a non--negative multiple of an integral element.

We now show that $P(m+1)$ implies that $P(m)$ holds as well.
So assume $P(m+1)$ holds and that we have
\[ g=(g_1,\dots,g_m,g_{m+1},\dots,g_n)\]
with $g_1,\dots,g_m\in \Q$ and $h\cdot [\a]>0$.
By continuity we can find $r>0$ such that $g_{m+1}-r\in \Q$ and with the property
that
\[ (g_1,\dots,g_m,g_{m+1}-r,\dots,g_n)\cdot [\a]>0.\]
We write
\[ (g_1,\dots,g_m,g_{m+1},\dots,g_n)=(g_1,\dots,g_m,g_{m+1}-r,\dots,g_n)+r e_{m+1}.\]
The claim now follows from $P(m+1)$ and $e_{m+1}\cup [\a]>0$.

\section{Proof of Theorem \ref{thm:symplecticcone}}\label{section:3}

We first prove the following proposition.

\begin{proposition} \label{prop:nondeg}
Let $M$ be a $4$--manifold with a free circle action. Denote by $p:M\to N$ the projection map to the
orbit space. Assume that $(N,p_*([\w]))$ fibers over $S^1$ for any symplectic form $\w$ such that  $p_*([\w])$ is an integral class which is  primitive in $H^1(N;\Z)$.
Then for any symplectic form $\w$ the class $p_*([\w])\in
H^1(N;\R)$ can be represented by a non--degenerate closed 1--form.
\end{proposition}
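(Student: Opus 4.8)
The plan is to show that $\phi := p_*([\omega])$ lies in the open cone over a fibered face of the Thurston norm ball of $N$; by the remark following Theorem \ref{thm:const} (see \cite{Th86}) this is equivalent to $\phi$ being representable by a non--degenerate closed 1--form. Throughout I write $V := \ker(\cup e\colon H^1(N;\R)\to H^3(N;\R))$, which by the Gysin sequence (\ref{sequ0}) is precisely the image of $p_*$, and I note that $V$ is a rational subspace since $e\in H^2(N;\Z)$ and the cup product is defined over $\Z$.

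First I would record two preliminaries. (i) $\phi\neq 0$: if $p_*([\omega])=0$, then by exactness of (\ref{sequ0}) we have $[\omega]=p^*(h)$ for some $h\in H^2(N;\R)$, whence $[\omega]^2=p^*(h^2)=0$ because $H^4(N;\R)=0$; this contradicts $[\omega]^2>0$, which holds since $\omega\wedge\omega$ is a positive volume form. (ii) The symplectic cone of $M$ is open: given any class $\psi$ near $[\omega]$, represent $\psi-[\omega]$ by a closed $2$--form $\tau$ of small $C^0$--norm, so that $\omega+\tau$ is closed, represents $\psi$, and remains non--degenerate by compactness of $M$. Fix a neighbourhood $B$ of $[\omega]$ in the symplectic cone.

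Next I would exploit rationality. Since $p_*\colon H^2(M;\R)\to V$ is a surjective linear map defined over $\Q$, it is open onto $V$, so $W:=p_*(B)$ is a relatively open neighbourhood of $\phi$ in $V$. I claim every nonzero rational point $q\in W$ lies in the cone on a fibered face. Indeed, the fibre $p_*^{-1}(q)$ is a rational affine subspace meeting the open set $B$, hence contains a rational class $\psi\in B$; rescaling $\psi$ by a suitable positive rational number makes $p_*(\psi)$ primitive integral, and the rescaled class is again symplectic because the symplectic cone is a cone. The hypothesis then says $(N,p_*(\psi))$ fibers over $S^1$, so $p_*(\psi)$, and hence its positive rational multiple $q$, is fibered.

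Finally I would conclude by a density argument, which I expect to be the main obstacle. Suppose $\phi$ lies in no open fibered cone. As $\phi\neq 0$ and the open cones over the open faces of the (rational) Thurston norm ball partition $H^1(N;\R)\setminus\{0\}$, there is a unique open face $F$ with $\phi\in\mathrm{cone}^{\circ}(F)$, and $F$ is not a fibered top face. Both $\mathrm{span}(F)$ and $V$ are rational subspaces, so $\mathrm{span}(F)\cap V$ has dense rational points; since $\mathrm{cone}^{\circ}(F)\cap V\cap W$ is a relatively open neighbourhood of $\phi$ in $\mathrm{span}(F)\cap V$, it contains a nonzero rational point $q$. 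Then $q\in W$ forces $q$ to be fibered, while $q\in\mathrm{cone}^{\circ}(F)$ forces $q$ to be non--fibered, a contradiction. The delicate point is exactly this step: one must guarantee that the failure locus (the walls between faces and the cones over non--fibered faces) always meets $W$ in nonzero rational points arbitrarily close to $\phi$, which rests on the rationality of both $V$ and the Thurston norm polytope together with the openness of $p_*$ onto $V$. Hence $\phi$ lies in an open fibered cone, completing the proof.
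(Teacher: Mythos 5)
Your proof is correct and follows essentially the same route as the paper's: reduce to the case where $p_*([\w])$ is rational (hence, after rescaling, primitive integral) and invoke the hypothesis, then handle irrational classes by combining the openness of the symplectic condition with the rationality of the Thurston norm ball and Thurston's fibered face theory from \cite{Th86}. The differences are only presentational: you make explicit some points the paper leaves implicit (that $p_*([\w])\neq 0$, and that the image of $p_*$ is a rational subspace of $H^1(N;\R)$), and you phrase the last step as a contradiction rather than directly citing \cite[Theorem~5]{Th86} to propagate representability from the nearby rational class to the whole open cone containing $p_*([\w])$.
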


\begin{proof}
First let $\w$ be a  symplectic form such that  $p_*([\w])\in H^1(N;\Q)$.
We can find $s\in \Q$ such that
$sp_*([\w])=p_*([s\w])$ is a primitive element in $H^1(N)$. By assumption $(N,sp_*([\w]))$ fibers
over $S^1$, in particular $sp_*([\w])$ (and hence $p_*([\w]$) can be represented by a
non--degenerate closed 1--form.

Now let $\w$ be a  symplectic form such that $p_*([\w])\in H^1(N;\R)\sm H^1(N;\Q)$, and let $C$ be the open cone over the face of the unit ball of the Thurston norm in which $C$ lies.
 (Note that $C$ is a priori not necessarily top dimensional.)
 Since the vertices of the Thurston norm ball are rational (cf. \cite[Section~2]{Th86}), and by the openness of the symplectic condition, we can find a symplectic form $\w'$ on
$M$ such that $p_*([\w'])$ is in $H^1(N;\Q)$ and is contained in the cone $C$ as well. By the previous observation it follows that there exists at least one element of $C$ (namely $p_*([\w'])$ itself) that can be represented by a
non--degenerate closed 1--form. But then by \cite[Theorem~5]{Th86} all  elements in $C$, in particular  $p_*([\w])$, can be represented by non--degenerate closed $1$--forms.
\end{proof}

We can now prove Theorem \ref{thm:symplecticcone}.

\begin{proof}[Proof of Theorem \ref{thm:symplecticcone}]
Let $N$ be a closed oriented 3--manifold and let $\psi \in H^2(S^1\times N;\R)$. We have to show that  the following are equivalent:
\bn
\item $\psi$ can be represented by a symplectic structure;
\item $\psi$ can be represented by a symplectic structure which is $S^1$--invariant;
\item $\psi^2>0$ and the K\"unneth component $\phi \in H^1(N;\R)$ of $\psi$ lies in the open cone on a fibered face of the Thurston norm ball of $N$.
\en
Clearly (2) implies (1). Theorem \ref{thm:const} shows that (3) implies (2).
By  the results of \cite[Theorems~1.2~and~1.4]{FV08}  we know that
 for any symplectic form $\w$ with $p_*([\w])\in H^1(N)$ primitive the
pair $(N,p_*([\w]))$ fibers over $S^1$.
(Note that this is stated only for integral forms $[\w]$ but the argument in \cite{FV08} carries through
for any $[\w]$ such that $p_*([\w])$ is primitive.)
Proposition \ref{prop:nondeg} then asserts  that for any symplectic form $\w$ the class $p_*([\w])\in H^1(N;\R)$ can be represented by a non--degenerate closed 1--form.
\end{proof}

Theorem \ref{thm:symplecticcone} lets us determine the symplectic cone for a significant class of $4$--manifolds.
Our result suggests that the symplectic cone shares the properties of the fibered cone of a 3--manifold.
We propose the following conjecture.

\begin{conjecture}\label{conj1}
Let $W$ be a symplectic $4$--manifold. Then there exists a (possibly non--compact) polytope $C\subset H^2(W;\R)$ with the following properties:
\bn
\item The dual polytope in $H_2(W;\R)$ is compact, symmetric, convex and integral.
\item There exist open top--dimensional faces $F_1,\dots,F_s$ of $C$ such that the symplectic cone coincides with all non--degenerate elements in the cone on $F_1,\dots,F_s$.
\en
\end{conjecture}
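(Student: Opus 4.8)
The plan is to follow the template already set by Theorem~\ref{thm:symplecticcone}: manufacture a candidate polytope $C$ out of gauge-theoretic data, prove an \emph{obstruction} statement confining the symplectic cone to the cone over finitely many faces, and prove a \emph{construction} statement filling in every non-degenerate class over those faces. The guiding analogy is that the role played by the Thurston norm of $N$ in the product case should be played, for a general symplectic $4$--manifold $W$ (say with $b_2^+>1$, the $b_2^+=1$ case carrying the usual wall-crossing caveats), by a Seiberg--Witten-type norm on $H^2(W;\R)$.

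First I would define the dual polytope directly from Seiberg--Witten theory. By Taubes' nonvanishing theorem $W$ has only finitely many basic classes, and by the conjugation symmetry $SW(\mathfrak{s})=\pm SW(\bar{\mathfrak{s}})$ they occur in pairs $\pm K$; moreover they are integral and characteristic. I would take the dual polytope $C^{*}\subset H_2(W;\R)$ to be the convex hull of the Poincar\'e duals $PD(K_1),\dots,PD(K_r)$, so that property~(1) is immediate: compactness from finiteness, symmetry from the $\pm$ pairing, convexity because it is a hull, and integrality because basic classes are integral. The polytope $C$ is then its dual unit ball, i.e. the unit ball of the norm $\|a\|=\max_i|\langle K_i,a\rangle|$, the natural $4$--dimensional analogue of the (dual) Thurston norm that governed the $S^1\times N$ case. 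The distinguished faces $F_1,\dots,F_s$ should be those top--dimensional faces over which the extremal basic class is realized by a genuine fibration or Lefschetz structure, exactly as fibered faces are singled out among all faces of the Thurston norm ball.

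Second, for the obstruction direction I would show that any symplectic $\psi$ satisfies $\psi^2>0$ (automatic) and lies in the open cone over one of the $F_j$. Here I would invoke Taubes' identification of Seiberg--Witten with Gromov invariants together with the adjunction-type inequalities: a symplectic form pairs positively with its canonical class, and the constraints on the basic classes pin $[\psi]$ into the cone determined by the canonical $\mathrm{Spin}^c$ structure, in direct parallel with the way the results of \cite{FV08} confined $p_*([\w])$ to a fibered face in the product setting.

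Third, and this is where the genuine difficulty lies, I would need a \emph{construction} theorem playing the role of Theorem~\ref{thm:const}: given a non-degenerate class in the cone over some $F_j$, build a symplectic form representing it. In the $S^1\times N$ situation the free circle action reduced everything to a fibration of $N$ onto which model forms could be glued; for a general $W$ there is no such action and no Thurston-type gluing is available. One would instead have to appeal to Donaldson's Lefschetz pencils, to near-symplectic deformation arguments, or to a Taubes-style existence scheme, and then arrange the resulting form to sit in the prescribed cohomology class. Proving that the gauge-theoretic polytope $C$ coincides \emph{exactly} with the closure of the symplectic cone — that the obstruction and construction sides meet along precisely these faces — is the central open obstacle, and I do not expect it to be reachable by the circle-action techniques of this paper alone; Theorem~\ref{thm:symplecticcone} should be viewed as the model case that makes the conjecture plausible rather than as a method that extends verbatim.
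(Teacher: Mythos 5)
This statement is posed in the paper as a \emph{conjecture}; the paper offers no proof of it, and Theorem \ref{thm:symplecticcone} is presented only as the motivating special case $W=S^1\times N$. So there is no argument in the paper to measure your proposal against, and your proposal itself is not a proof but a research program --- which, to your credit, you say explicitly in your final paragraph. As a program it is the natural one, and the candidate polytope (the dual of the convex hull of the Seiberg--Witten basic classes, mirroring the fact that for $S^1\times N$ the basic classes recover the dual Thurston norm ball) is the right guess for what $C$ should be. But both halves of the argument have genuine holes, not just the one you flag.

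Concretely: (a) The \emph{obstruction} direction is not supplied by what you cite. Taubes' nonvanishing and the adjunction-type inequalities give constraints of the form $|K_i\cdot[\psi]|\le K_\omega\cdot[\psi]$ for basic classes $K_i$ (and finiteness of basic classes is a compactness statement requiring $b_2^+>1$, not Taubes' theorem); these confine $[\psi]$ relative to the canonical class, but they do not single out finitely many \emph{top-dimensional faces} of your polytope, nor do they show that the classes the symplectic cone omits are exactly those over the remaining faces. Even in the model case the analogous step required the full strength of \cite{FV08} (twisted Alexander polynomials detecting fiberedness), not just the SW basic-class inequalities. (b) The \emph{construction} direction is, as you say, entirely open: there is no analogue of Theorem \ref{thm:const} without the free circle action, and neither Donaldson pencils nor near-symplectic techniques are currently known to produce a symplectic form in a \emph{prescribed} cohomology class over a given face. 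Since both the containment and the realization steps are missing, the proposal should be read as a plausible refinement of the conjecture (identifying what $C$ ought to be) rather than as a proof attempt with a single repairable gap.
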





\begin{thebibliography}{10}
\bibitem[BT82]{BT82}
R. Bott and L. W. Tu, {\em
Differential forms in algebraic topology},
Graduate Texts in Mathematics, 82, Springer-Verlag (1982)
\bibitem[Bou88]{Bou88}
A. Bouyakoub, {\em  Sur les fibr\'es principaux de dimension $4$, en tores, munis de structures symplectiques
invariantes et leurs structures complexes}, C. R. Acad. Sci. Paris S\'er. I Math. 306 (1988),  no. 9, 417--420.
\bibitem[Bow09]{Bow09}
J. Bowden, {\em The topology of symplectic circle bundles},
 Trans. Amer. Math. Soc. 361 (2009), no. 10, 5457--5468.

\bibitem[FGM91]{FGM91}
M. Fern\'andez, A. Gray and J. W. Morgan, {\em Compact symplectic manifolds with free circle actions, and Massey
products},  Michigan Math. J.  38, no. 2, 271--283 (1991).
\bibitem[FV07]{FV07} S. Friedl and S. Vidussi, {\em
Symplectic 4-manifolds with a free circle action}, unpublished note (2007)
\bibitem[FV08]{FV08}
S. Friedl and S. Vidussi, {\em Twisted Alexander polynomials detect fibered 3--manifolds}, to be published by the Annals of Mathematics (2008)
\bibitem[KM08]{KM08}
P. Kronheimer and T. Mrowka, {\em Knots, sutures and excision}, J. Differential Geom. 84 (2010), no. 2, 301–364
\bibitem[KT09]{KT09}
\c C. Kutluhan and C. Taubes, {\em Seiberg-Witten Floer homology and symplectic forms on $S^1\times M^3$}, Geometry \& Topology 13 (2009), 493-525
\bibitem[Li08]{Li08}
T. J.  Li, {\em The space of symplectic structures on closed 4-manifolds}, Third International Congress of Chinese Mathematicians. Part 1, 2, 259--277, AMS/IP Stud. Adv. Math., 42, pt. 1, 2, Amer. Math. Soc., Providence, RI, 2008.
\bibitem[Ni08]{Ni08} Y. Ni, {\em Addendum to: ``Knots, sutures and excision''}, Preprint (2008)
\bibitem[Lt87]{Lt87} G. Levitt, {\em 1--formes ferm\'ees singuli\`eres et groupe fondamental}, Invent. Math. 88, 635--667 (1987)
\bibitem[Th76]{Th76} W. P. Thurston,
{\em Some simple examples of symplectic manifolds}, Proc. Amer. Math. Soc. 55 (1976), no. 2, 467--468.
\bibitem[Th86]{Th86} W. P. Thurston, {\em A norm for the homology of 3--manifolds}, Mem.
Amer. Math. Soc. 339: 99--130 (1986).
\end{thebibliography}
\end{document}